\DeclareMathOperator{\Aut}{Aut} \DeclareMathOperator{\Out}{Out}
\DeclareMathOperator{\Hom}{Hom} 
 \DeclareMathOperator{\Comp}{C}
\DeclareMathOperator{\rk}{rank} \def\immerses{\looparrowright}
\def\onto{\twoheadrightarrow} \def\into{\hookrightarrow}
\def\fontop#1{\stackrel{#1}{\longrightarrow}}
\def\ncl#1{\mathord{\langle}\mskip -4mu plus 0mu minus 0mu
  \mathord{\langle}#1\mathord{\rangle}\mskip -4mu plus 0mu minus 0mu
  \mathord{\rangle}}
\def\zee{\mathbb{Z}}
\def\free{\ensuremath{\mathbb{F}}}
\def\define{\mathrel{\mathop:}=}
\newcommand{\mnote}[1]{}
\newtheorem{theorem}{Theorem}[section]
\newtheorem{lemma}[theorem]{Lemma}
\newtheorem{example}[theorem]{Example}
\theoremstyle{definition}
\newtheorem{definition}[theorem]{Definition}
\theoremstyle{remark}
\author{Larsen Louder} 
\title{Nielsen equivalence in closed surface groups}
\begin{document}
\maketitle

\begin{abstract}
  Any two generating systems of the fundamental group of a closed
  surface are Nielsen equivalent.
\end{abstract}

\section{Introduction}

The purpose of this paper is to show that there is essentially only
one way to generate the fundamental group of a closed surface. A
\emph{marking} of a finitely generated group $G$ is a surjective map
$f\colon\free_n\onto G$, where $\free_n$ is a free group of rank $n$
with a fixed basis $x_1,\dotsc,x_n$. Markings are tautologically in
one-to-one correspondence with tuples $(s_1,\dotsc,s_n)\in G^n$ such
that the elements $s_1,\dots,s_n$ generate $G$. Two markings are
\emph{Nielsen equivalent} if they differ by an automorphism of
$\free_n$, i.e., $f\sim g$ if there is some
$\varepsilon\in\Aut(\free_n)$ such that $f=g\circ\varepsilon$. By
Nielsen's theorem, $\Aut(\langle x_i\rangle)$ is generated by signed
permutations (maps of the form $x_i\mapsto x^{\pm 1}_{\sigma(i)}$,
where $\sigma$ is a permutation of $(1,\dotsc,n)$) and product
replacements (maps of the form $x_i\mapsto x_ix_j$, $x_k\mapsto x_k$
for $j,k\neq i$). Collectively these are known as Nielsen
transformations. Nielsen transformations descend to transformations on
tuples $(s_i)$ in the obvious way.

\begin{theorem}
  \label{maintheorem}
  Let $\Sigma$ be a closed surface with $\chi(\Sigma)\leq 0$. Then any
  two markings of $\pi_1(\Sigma)$ of the same length are Nielsen
  equivalent.
\end{theorem}

This is well known for the torus and Klein bottle, though our proof
also works for these two cases. Zieschang proved that all minimal rank
markings of closed orientable surface groups of genus $g\neq 3$ are
equivalent \cite{zieschang}.

A marking $f\colon\free_n\onto G$ is \emph{reducible} if $f(x_i)=1$
for some $i$, and \emph{weakly reducible} if $f$ factors as $g\circ
r$, where $r\colon\free_n\onto \free_{n-1}$ and
$g\colon\free_{n-1}\onto G$: there is a free basis $y_1,\dotsc,y_n$
of $\free_n$ such that $r(y_n)=1$ and $r(y_1),\dotsc,r(y_{n-1})$ is a
free basis of $\free_{n-1}$.  Likewise a tuple is reducible if $s_i=1$
for some $i$, and is weakly reducible if equivalent to a reducible
one.  Alternatively, say that $(s_1,\dotsc,s_n)$ is obtained from
$(s_1,\dotsc,\hat s_i,\dotsc,s_n)$ by \emph{stabilization} if
$s_i=1$. Likewise, if $s_{i_1},\dotsc,s_{i_k}$ are trivial, $i_p\neq
i_q$ for all $p$ and $q$, then $S$ is \emph{$k$ stabilized}.

It is often inconvenient to work directly with elements of a group. In
what follows we reinterpret Stallings/Bestvina, Feighn/Dunwoody
folding sequences for morphisms of graphs of groups in a category of
square complexes. In this setting folds of graphs of groups become
collapses/uncollapses of free faces and folds in the one-skeleton.

We finish this introduction with a brief overview of known results and
related topics. The oldest incarnation of Nielsen equivalence is the
Euclidean algorithm: if $p\geq q\geq 0$ generate $\zee$, then $p-q,q$
generate $\zee$ as well. If $p$ and $q$ generate $\zee$ then by
applying Nielsen moves according to the Euclidean algorithm we obtain
the generating tuple $(1,0)$. More generally, Gaussian elimination
amounts to the fact that any two markings of $\zee^n$ with the same
rank are equivalent. Throughout this paper we are implicitly using the
fact that any two length $m$ markings of $\free_n$ are equivalent
(Theorem~\ref{cor::nielsen}).

There is a related notion from $3$--manifold topology. Given two
Heegaard splittings of a closed orientable three-manifold, how many
handles must be added to each in order to make them equivalent? Given
a genus $g$ Heegaard splitting, the cores of the associated
handlebodies~\cite{hempel-3manifolds} are naturally generating systems
of rank $g$ of the fundamental group. Adding additional handles
corresponds precisely to stabilization of the natural generating
systems, and the number of stabilizations needed to obtain equivalent
generating systems is clearly a lower bound to the number needed to
obtain equivalent Heegaard splittings. Lustig and Moriah use this
principle to give examples of inequivalent Heegaard splittings of
certain Seifert fibered spaces, first by producing Nielsen
inequivalent minimal cardinality generating systems of their
associated Fuchsian groups~\cite{lustig-moriah}.

The kind of generating system associated to a Heegaard splitting was
used by Kapovich and Weidmann as a prototype for constructing
$n$--generator small-cancellation groups with generating systems that
are inequivalent after one
stabilization~\cite{kapweidsmallcancellation}.

The question of equivalence is also interesting for finite
groups. Dunwoody showed that if $G$ is a finite solvable group of rank
$n-1$ then any generating system with $n$ elements is
reducible~\cite{dunwoody::nielsen}. On the other end of the spectrum
we have the Wiegold conjecture, which says that any two markings of
cardinality $n\geq 3$ of a finite simple group should be Nielsen
equivalent. In particular any two generating systems with two elements
should be equivalent after one stabilization. See~\cite{lubotzky} for
a survey on $\Aut(\free_n)$ actions on $\Hom(\free_n,G)$ for more
general, but related, groups $G$.

\subsection*{Acknowledgments}

Thanks to Juan Souto for suggesting the problem. The author was
supported in part by EPSRC grant EP/D073626/2, NSF MSPRF DMS-0703658,
and NSF RTG DMS-0602191.

\section{Nielsen equivalence}

As noted above, it is inconvenient to work directly with elements of
a group. Let $G$ be a finitely generated group and let $Y$ be a
topological space with a fixed identification of $\pi_1(Y)$ with
$G$. Call a pair $(X,\varphi),$ where $X$ is a compact, aspherical,
two-dimensional CW--complex, $\pi_1(X)$ is free, and $\varphi\colon
X\to Y$ a continuous, $\pi_1$--onto map, a \emph{generating system}
for $\pi_1(Y)$. A morphism $\varepsilon\colon (X,\varphi)\to
(X',\varphi')$ is a continuous map $\varepsilon$ such that
$\varepsilon_*$ is surjective and $\varphi'\circ\varepsilon$ is
homotopic to $\varphi$. The \emph{rank} of $(X,\varphi)$ is the
minimal number of elements needed to generate $\pi_1(X)$.

A morphism is an \emph{equivalence} if it is a homotopy equivalence,
and is a \emph{reduction} if it is surjective but not injective on
fundamental groups. If $(X,\varphi)$ factors through a lower rank
$(X',\varphi')$ then $(X,\varphi)$ is \emph{reducible}: Let $\{x_i\}$
be a (any) basis for $\pi_1(X)$, and let $S=(\varphi_*(x_i))$. Then by
Nielsen's theorem $S$ is equivalent to some $S'$ containing the
identity element.

Let $X$ be a compact aspherical CW--complex with free fundamental
group, and choose for $X$ a fixed identification of $\pi_1(X)$ with
$\langle x_i\rangle$. Given a rank $n$ marking $f\colon\free_n\onto G$
there is always a map $\hat f\colon X\to Y$ such that $\hat
f_*=f$. Any other identification $\langle y_i\rangle\cong\pi_1(X)$
then gives an equivalent generating tuple.

It follows from basic algebraic topology that, given two equivalent
markings $f$, $g$, and spaces $X_f$ and $X_g$ as above, there is a
homotopy equivalence $\varepsilon\colon X_f\to X_g$ such that
$g\circ\varepsilon$ is homotopic to $f$ and $\hat g_*\circ
\varepsilon_*=\hat f_*$. Conversely, if there is such a homotopy
equivalence then all markings determined by $f$ and $g$ are
equivalent.

\begin{definition}
  Consider the usual presentations
  \[
  \langle x_1,y_1, \dotsc,x_g,y_g \mid
  \left[x_1,y_1\right]\dotsb\left[x_g,y_g\right]\rangle 
  \]
  and
  \[\langle x_1,\dotsc,x_n\mid x_1^2\dotsb x_n^2\rangle \]
  of orientable and nonorientable surface groups, respectively. The
  markings $(x_i,y_i)$ and $(x_i)$ are \emph{standard}.

  A \emph{standard generating system} of a closed connected surface
  $\Sigma$ is a pair $(X,i)$, where $X$ is any compact CW strong
  deformation retract of $\Sigma\setminus p$ with $p\in\Sigma$ and
  $i\colon X\into\Sigma$ the inclusion map.
\end{definition}

\begin{lemma}
  All standard generating systems are equivalent.
\end{lemma}

\begin{proof}
  Let $(X,i)$ and $(X',i')$ be standard generating systems, and let
  $r\colon \Sigma\setminus p\to X$ and $r'\colon\Sigma\setminus p'\to
  X'$ be retractions associated to $X$ and $X'$. Let
  $\sigma\colon\Sigma\times\left[0,1\right]\to\Sigma$ be an isotopy of
  $\Sigma$ carrying $p$ to $p'$. (The ``point-pushing'' map.) Set
  $\varepsilon=r'\circ\sigma\vert_{\Sigma\times\{1\}} \circ i\colon
  X\to X'$. Then $i'\circ \varepsilon \sim_h i$.
\end{proof}

Consider a presentation $2$--complex homeomorphic to a surface
$\Sigma$ associated to one of the above presentations. In each case
the one-skeleton is an embedded subgraph $X\subset\Sigma$ representing
the standard markings $(x_i,y_i)$ (or $(x_i)$), with the property that
if $p\in \Sigma\setminus X$ then $X$ is a strong deformation retract
of $\Sigma\setminus p$. Note that $(X,i)$, where $i$ is the inclusion
map, is standard. In topological terms, Theorem~\ref{maintheorem} is
equivalent to:

\begin{theorem}
  \label{maintheoremii}
  Every generating system for the fundamental group of a
  closed surface $\Sigma$ with $\chi(\Sigma)\leq 0$ is either
  reducible or equivalent to a standard generating system.
\end{theorem}

\section{Graphs and Stallings' folding}

There are two commonly used definitions of graphs, each of which will
be useful. First, one may take a graph to be a one-dimensional cell
complex. Alternatively, a graph is a pair of sets $V$ and $E$, which
are the vertices and oriented edges, respectively, with a pair of maps
$\tau\colon E\to V$ and $\iota\colon E\to V$, with a fixed point free
involution $\overline{\phantom{e}}\colon E\to E$ such that
$\tau\circ\overline{\phantom{e}}=\iota$ and
$\iota\circ\overline{\phantom{e}}=\tau$~\cite[\S2.1]{serre}. Let $E'$
be collection of orbit representatives for
$\overline{\phantom{e}}$. Then a graph in the first sense may be
recovered by introducing a vertex for each element of $V$ and for each
element $e$ of $E'$ an edge $e$ identified with
$\left[0,1\right]$. Then glue $1$ to $\tau(e)$ and $0$ to
$\iota(e)$. It is often useful to think of a graph in the second sense
as a category. We will move freely between the two notions.

A \emph{Stallings fold, or simply fold,} is a surjective morphism of
graphs $\phi\colon V\to V'$ such that $\phi$ only identifies two edges
which have a common endpoint. A morphism of graphs induces maps of
links of vertices, and a morphism inducing injections on all links of
vertices is an \emph{immersion}. Immersions of graphs are
$\pi_1$--injective. A graph is \emph{core} if it doesn't have any
valence one vertices.

\begin{theorem}[\cite{stallings}]
  \label{stallings}
  A morphism of finite graphs $\varphi\colon V\to V'$ factors as 
  \[V=V_0\fontop{\phi_0}V_1\fontop{\phi_1}V_2\to\dotsb\to V_k\immerses V'\]
  where $\phi_i$ is a fold and $V_k\immerses V'$ is an immersion.
\end{theorem}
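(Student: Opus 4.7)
The plan is to induct on the number of edges of $V$ using the number of edges as a well-founded measure, and at each stage detect a failure of immersion and resolve it by a single fold. Concretely, I would first observe the characterization: $\varphi$ fails to be an immersion iff there is some vertex $v$ and two distinct oriented edges $e, e'$ with $\iota(e) = \iota(e') = v$ and $\varphi(e) = \varphi(e')$, i.e., $\varphi$ is not injective on $\link(v)$. This is just the contrapositive of the definition of an immersion. If no such pair exists at any vertex of $V$, then $\varphi$ is already an immersion and we set $k = 0$.

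Otherwise, having picked such a pair $(e, e')$, I would define $V_1 = V/(e \sim e', \overline e \sim \overline{e'})$, the quotient graph obtained by identifying these two edges (which forces the identification of their terminal vertices if they were distinct). The quotient map $\phi_0 \colon V \to V_1$ is a fold in the sense defined above. The key point is that $\varphi$ factors uniquely through $\phi_0$: since $\varphi(e) = \varphi(e')$ by choice, and $\varphi(\iota e) = \iota \varphi(e) = \iota \varphi(e') = \varphi(\iota e')$, the map $\varphi$ is constant on each equivalence class, so it descends to a well-defined morphism $\varphi_1 \colon V_1 \to V'$ with $\varphi = \varphi_1 \circ \phi_0$. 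Crucially $V_1$ has strictly fewer unoriented edges than $V$.

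Applying the inductive hypothesis to $\varphi_1 \colon V_1 \to V'$ gives a factorization
\[
V_1 \fontop{\phi_1} V_2 \fontop{\phi_2} \dotsb \fontop{\phi_{k-1}} V_k \immerses V',
\]
and prepending $\phi_0$ yields the desired factorization for $\varphi$. The base case is when $V$ has no edges, in which case every morphism is vacuously an immersion. I do not expect a genuine obstacle here: the only points that require care are (i) verifying that the quotient $V_1$ is still a well-defined graph in our sense (the involution $\overline{\raisebox{6pt}{\ }}$ descends because we identified $\overline e$ with $\overline{e'}$ simultaneously), and (ii) checking that $\varphi_1$ is a legitimate morphism of graphs, both of which follow formally from the universal property of the quotient together with the equality $\varphi(e) = \varphi(e')$. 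Termination is automatic from the strict decrease in edge count.
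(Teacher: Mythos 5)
Your argument is correct and is essentially the standard folding argument of Stallings, which the paper simply cites rather than proves: detect a failure of injectivity on some link, fold the offending pair of edges, observe the strict drop in the number of edges, and induct. The only point worth making explicit is that the offending pair satisfies $e'\neq\overline{e}$ (because the involution on $V'$ is fixed-point free and morphisms commute with it), so the identification really is a fold in the paper's sense and the unoriented edge count genuinely decreases.
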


This is the first step in a geometric proof of the essential fact that
any two markings of the same length of a free group are Nielsen
equivalent. One can also give a very transparent proof, for example,
that $\Out(\free_n)$ is generated by Whitehead moves. See~\cite{wade}.

\begin{theorem}[Nielsen's theorem]
  \label{cor::nielsen}
  Let $f\colon\free_m\onto\free_n$ be a marking of $\free_n$ and let
  $x_1,\dotsc,x_n$ be a free basis of $\free_n$. Then there is a free
  basis $y_1,\dotsc,y_m$ of $\free_m$ such that $f(y_i)=x_i$ for
  $i\leq n$ and $f(y_i)=1$ for $n+1\leq i\leq m$.
\end{theorem}

The basic principle is that folds which are homotopy equivalences
correspond to Whitehead moves (which are compositions of Nielsen
transformations), and folds which aren't homotopy equivalences clearly
kill basis elements. The proof of Theorem~\ref{maintheorem} follows
the same lines. A graph is a graph of groups with trivial edge groups,
and Stallings' folding is a special case of a more general theory of
foldings of graphs of groups decompositions. Surfaces admit graphs of
groups decompositions over cyclic edge groups and free vertex
groups. Roughly, we represent a marking
$f\colon\free_n\onto\pi_1(\Sigma)$ as a morphism of graphs of groups
and construct a folding sequence which eventually demonstrates that
$f$ is either reducible or equivalent to a standard set of
generators. See Section~\ref{sec::folding}.

We leave the proof of the Lemma~\ref{starslemma}, needed in
Lemma~\ref{lem::notinjective}, to the reader. See
Figure~\ref{fig::continuefolding}.

\begin{lemma}
  \label{starslemma}
  Let $\phi\colon V\to V'$ be a morphism of finite core graphs which
  is surjective on links of vertices and not injective on at least
  one link. Then $\phi$ is not $\pi_1$--injective.
\end{lemma}

\begin{figure}[ht]
  \centerline{
    \includegraphics[width=.6\textwidth]{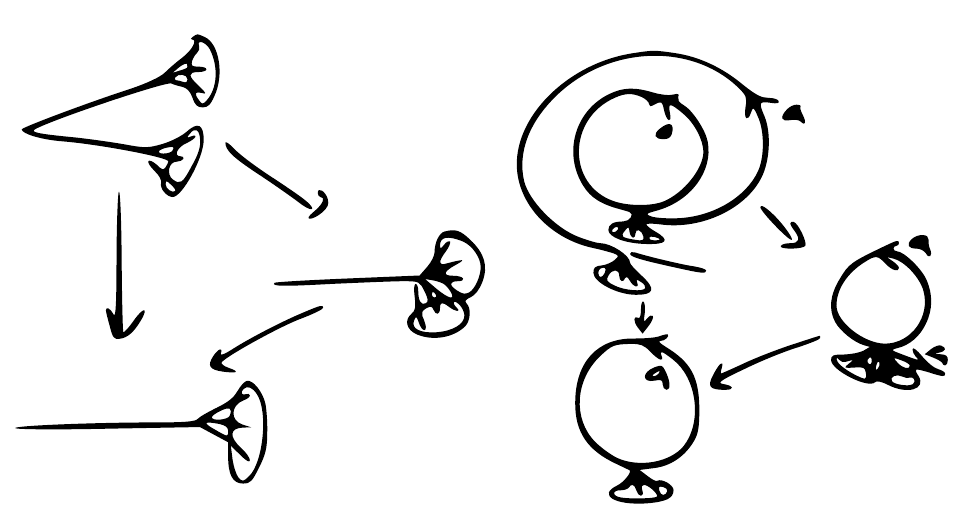}
  }
  \caption{A morphism of graphs which is surjective on all links of
    vertices can be folded again unless it identifies two edges with
    the same endpoints.}
\label{fig::continuefolding}
\end{figure}

\begin{lemma}
  \label{lem::notinjective}
  Let $A$, $B$, $C$, $V$ and $W$ be pointed core graphs with immersions of
  pointed graphs $\alpha\colon A\to C$, $\beta\colon B\to C$,
  $\gamma\colon C\to W$, $\epsilon\colon A\to V$, and $\nu\colon V\to
  W$, such that
  \begin{itemize}
    \item $A,B$, and $C$ are circles
    \item there is an edge $h$ of $V$ which is traversed exactly once
      by $\epsilon$
    \item $\nu$ is a finite sheeted cover and
    \item $\gamma\circ\alpha=\nu\circ\epsilon$.
  \end{itemize}
  Let $b$ and $v$ be the basepoints of $B$ and $V$. If
  $\deg(\beta)<\deg(\alpha)$ then the induced map
  \[
  V'=(V\setminus h^{\circ})\vee_{b=v}B \to W
  \]
  is not $\pi_1$ injective.
\end{lemma}

Intuitively, $A$ and $B$ are powers of $C$. Removing the interior of
$h$ eliminates $A$, but we add a lower power of $C$ by wedging
$V\setminus h^{\circ}$ and $B$ together. The map on $V'$ is not
injective on fundamental group. See Figure~\ref{fig::addroot}

\begin{proof}
  Let 
  \[
  V'\to D_1\to D_2\to\dotsb\to W
  \]
  be a folding sequence for the morphism $V'\to W$. For some $i$ the
  map $D_i\to W$ is surjective but not injective on all links of
  vertices. By Lemma~\ref{starslemma} $V'\to W$ is not injective on
  fundamental group.
\end{proof}

\begin{figure}[ht]
  \centerline{
    \includegraphics[width=\textwidth]{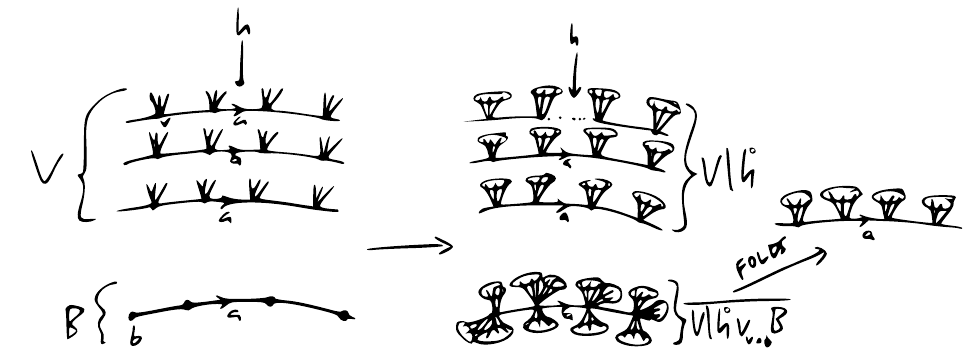}
  }
  \caption{$\overline{(V\setminus h^{\circ})\vee_{b=v}B}$ indicates a
    quotient of $V'$ where Lemma~\ref{starslemma} is applicable.}
\label{fig::addroot}
\end{figure}

\begin{definition}[Collapsible]
  \label{def::collapsible}
  Let $(V,\mathcal{E})$ be a pair consisting of a graph $V$ and a
  collection of immersed loops $\mathcal{E}=\{\tau_i\colon E_i\to
  V\}_{i\in I}$ with the property that every edge is traversed at most
  two times. Such a pair $(V,\mathcal{E})$ is \emph{collapsible} if
  and only if, for every nonempty $J\subset I$, there is an edge $e$
  in the subgraph $W=W_J=\cup_{j\in J}\tau_j(E_j)$ which is traversed
  exactly once by the collection $\{\tau_j\}_{j\in J}$.
\end{definition}

The following is an exercise.

\begin{lemma}
  \label{lem::foldcollapsible}
  Let $(V,\mathcal{E})$ be as above, let $\phi\colon V\to V'$ be a
  fold and let $\mathcal{E}'$ be the induced collection
  $\{\tau'_i=\phi\circ\tau_i\colon E_i\to V'\}$. Assume that each
  $\tau'_i$ is an immersion and that every edge of $V'$ is traversed
  at most twice. If $(V,\mathcal{E})$ is collapsible and $\phi$ is a
  homotopy equivalence then $(V',\mathcal{E}')$ is collapsible.
\end{lemma}

We briefly recall the construction of the pullback of a pair of
morphisms of graphs from~\cite{stallings}. Let $\alpha\colon A\to C$
and $\beta\colon B\to C$ be immersions of graphs. Then $\alpha$ and
$\beta$ represent conjugacy classes of subgroups of $\pi_1(C)$. To
compute the intersection of $A$ and $B$ form pullback of $\alpha$
and $\beta$:
\[A\times_{C}B=\{(x,y)\mid\alpha(x)=\beta(y)\}\subset A\times B\] Let
$\pi_A\colon A\times_CB\to A$ and $\pi_B\colon A\times_CB\to B$ be the
maps induced by the projections $A\times B\to A$ and $A\times B\to
B$. If $a$ and $b$ are basepoints in $A$ and $B$, respectively, then
the component of $A\times_CB$ containing $(a,b)$ represents the
intersection $\alpha_*(\pi_1(A,a))\cap\beta_*(\pi_1(B,b))$. Given a
pair of morphisms of graphs $\tau_A\colon D\to A$ and
$\varphi_B\colon D\to B$ such that
$\alpha\circ\tau_A=\beta\circ\varphi_B$ there is always a
\emph{unique} morphism of graphs $\epsilon\colon D\to A\times_{C}B$
such that $\pi_A\circ\epsilon=\tau_A$ and
$\pi_B\circ\epsilon=\varphi_B$.

\section{Folding graphs of groups}

\label{sec::folding}

A \emph{graph of groups} is a tuple $\Delta=(V,\{G_v\},\{G_e\})$,
where $V$ is a (finite) graph equipped with a \emph{vertex group}
$G_v$ for each vertex $v$, an \emph{edge group} $G_e$ for each edge
$e$, and for each endpoint $v$ of an edge $e$, an injective map
$G_e\into G_v$. A morphism of graphs of groups is a morphism of graphs
equipped with additional data corresponding to vertex and edge
groups. If $\Delta$ and $\Delta'$ are graphs of groups, then a
morphism $\varphi\colon\Delta\to\Delta'$ is given by a map
$\varphi\colon V\to V'$, and homomorphisms $\varphi_e\colon G_e\to
G'_{\varphi(e)}$, $\varphi_v\colon G_v\to G'_{\varphi(v)}$, such that
for each edge $e$ and vertex $v$ adjacent to $e$, the compositions
$G_e\into G_v\to G_{\varphi(v)}$ and $G_e\to G'_{\varphi(e)}\into
G'_{\varphi(v)}$ are equal up to conjugacy.

Every morphism of graphs of groups with finitely generated edge groups
factors through a sequence composed of three special kinds of
morphisms which are collectively called folds, each of which measures,
to some extent, a failure of Britton's lemma. \emph{Folding and
  pulling} are illustrated in Figures~\ref{fig::folding}
and~\ref{fig::pulling2}, which we have essentially borrowed
from~\cite{bf::folding}. A \emph{vertex morphism} is a morphism of
graphs of groups where the map is given by passing to a quotient of a
vertex group~\cite{dunwoody}.

Given an amalgamated product of groups (or HNN extension) $G=A*_CB$,
any $g\in G$ is either conjugate into one of $A$ or $B$, or has a
conjugate which can be written as a product $\Pi a_i b_i$ such that
$a_i, b_j\not\in C$. Likewise, if $G$ is the fundamental group of a
graph of groups $\Delta$ then any element has an essentially unique
normal form with respect to $\Delta$. If $G\to H$ is induced by a
morphism of graphs of groups $\Delta\to\Delta'$ and $g$ is written as
a reduced product of elements of vertex groups and stable letters,
then the image of $g$ in $H$ is given an induced description as a
product of elements of vertex groups and stable letters. If the
induced product is not reduced, then $\Delta\to \Delta'$ factors
through an elementary fold.

If $\phi\colon\Delta_G\to\Delta_H$ is a morphism of graphs of groups
then there is a $G$--equivariant map $\widetilde{\phi}\colon T_G\to
T_H$, where $T_G$ and $T_H$ are the trees associated to $\Delta_G$ and
$\Delta_H$. If $\phi$ is either folding or pulling then it corresponds
to a $G$--equivariant composition of folds which, in the former
case, identify edges in different $G$ orbits, and in the latter case,
identify edges in the same $G$--orbit. Vertex morphisms correspond to
dividing vertex stabilizers in $T_G$ by subgroups of the kernels of
their actions on $T_H$. Nevertheless, we call all three types of moves
folds. The map $\phi$ is simply a more compact description of
$\widetilde{\phi}$.

\begin{figure}[ht]
  \centerline{\includegraphics[width=.8\textwidth]{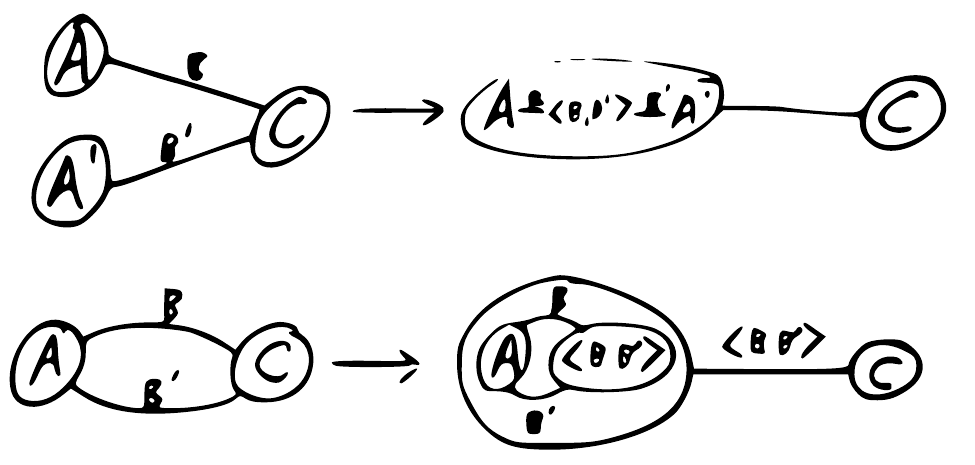}}
  \caption{Folding distinct edges together. We omit illustrating the
    case of a single vertex. In the first row the edges labeled $B$
    and $B'$ are folded together: The vertices labeled $A$ and $A'$
    are replaced by a single vertex labeled $A*_B\langle B,B'\rangle
    *_{B'}A'$ and an edge labeled $\langle B,B'\rangle$ is
    introduced.}
  \label{fig::folding}
\end{figure}

\begin{figure}[ht]
  \centerline{\includegraphics[width=.8\textwidth]{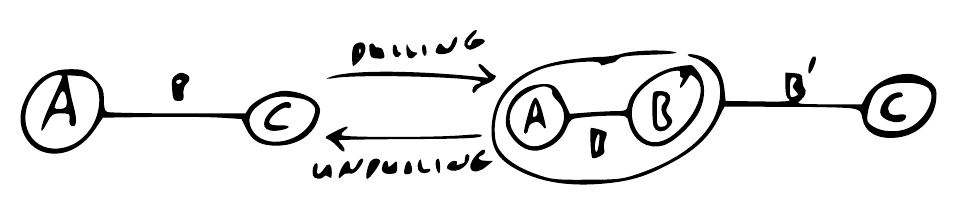}}
  \caption{Enlarging an edge group by pulling. In this case $B<B'<C$
    and we replace $A$ by $A*_BB'$ and $B$ by $B'$. The reverse is
    \emph{unpulling}}
  \label{fig::pulling2}
\end{figure}

Given a surface $\Sigma$ and a surjective map $f\colon \free_n\onto
G=\pi_1(\Sigma)$ we would like to find a folding sequence of graph of
groups decompositions
\[
\Delta_0\to \Delta_1\to\dotsb\to\Delta_m\to\Delta_{\Sigma}
\]
such that 
\begin{itemize}
\item $\Delta_0$ is a graph of groups decomposition of $\free_n$,
  $\Delta_{\Sigma}$ is a graph of groups decomposition of
  $\pi_1(\Sigma)$,
\item $\Delta_i\to\Delta_{i+1}$ is surjective on fundamental group,
  and
\item the fundamental group of $\Delta_i$ is free,
\item the terminal map $\Delta_m\to\Delta_{\Sigma}$ represents a
  standard generating system.
\end{itemize}

Existence of such a sequence would imply
Theorem~\ref{maintheorem}. Unfortunately, a naive folding sequence is
unlikely to satisfy the last two bullets, as the following example,
illustrated in in Figure~\ref{fig::naive}, shows.

\begin{example}
\label{ex::naive}
Let $G=\langle x_1,y_1,x_2,y_2\mid
\left[x_1,y_1\right]=\left[x_2,y_2\right]\rangle$ be the fundamental
group of the genus two surface, and let $A_1$ and $B_1$ be index three
and five subgroups of $\langle x_1,y_1\rangle$ and $\langle
x_2,y_2\rangle$, respectively, with one boundary subgroup,
corresponding to three and five-fold covers of the torus with
boundary. Then $A_1*B_1$ corresponds to a generating system of $G$
with ten generators. We define a folding sequence carrying $A_1*B_1$
to $G$ as follows:

We first pull the group
$\langle\left[x_2,y_2\right]^5\rangle=\langle\left[x_1,y_1\right]^5\rangle$
across the trivial edge in the free product $A_1*B_1$. Define
$A_2=A_1*\langle \left[x_1,y_1\right]^5\rangle$. Then $A_1*B_1\onto
A_2*_{\langle\left[x_1,y_1\right]^5\rangle}B_2$, with
$B_2=B_1$. Follow this by a vertex morphism $A_2\onto A_3$. Since
$A_2\to \langle x_1,y_1\rangle$ is surjective we have $A_3=\langle
x_1,y_1\rangle$. There is a natural map
$A_2*_{\langle\left[x_1,y_1\right]^5\rangle}B_2\onto
A_3*_{\langle\left[x_1,y_1\right]^5\rangle} B_3$, with $B_3=B_2$. Now
pull the subgroup $\langle\left[x_1,y_1\right]\rangle<A_3$ across the edge
to obtain an intermediate group $\langle
x_1,y_1\rangle*_{\langle\left[x_1,y_1\right]\rangle}B_4$, where
$B_4=(B_1*\langle\left[x_1,y_1\right]\rangle)/\ncl{\left[x_1,y_1\right]^5=\left[x_2,y_2\right]^5}$. Follow
this by a vertex morphism $B_4\onto B_5$. Since $B_4\to\langle
x_2,y_2\rangle$ is surjective, we have $B_5=\langle x_2,y_2\rangle$,
and the folding sequence is complete.
\end{example}

\begin{figure}
\labellist
\pinlabel $A_1$ [br] at 1 205
\pinlabel $B_1$ [b] at  179 192
\pinlabel $\langle x_1,y_1\rangle$ [b] at 333 206
\pinlabel $\langle x_2,y_2\rangle$ [b] at 453 197
\pinlabel $A_2$ [br] at  1 65
\pinlabel $B_2$ [l] at 210 30
\pinlabel $A_3$ [r] at 133 103
\pinlabel $B_3$ [l] at 343 100
\pinlabel $A_4$ [r] at 285 33
\pinlabel $B_4$ [l] at 496 29
\endlabellist
\centerline{
  \includegraphics[width=.8\textwidth]{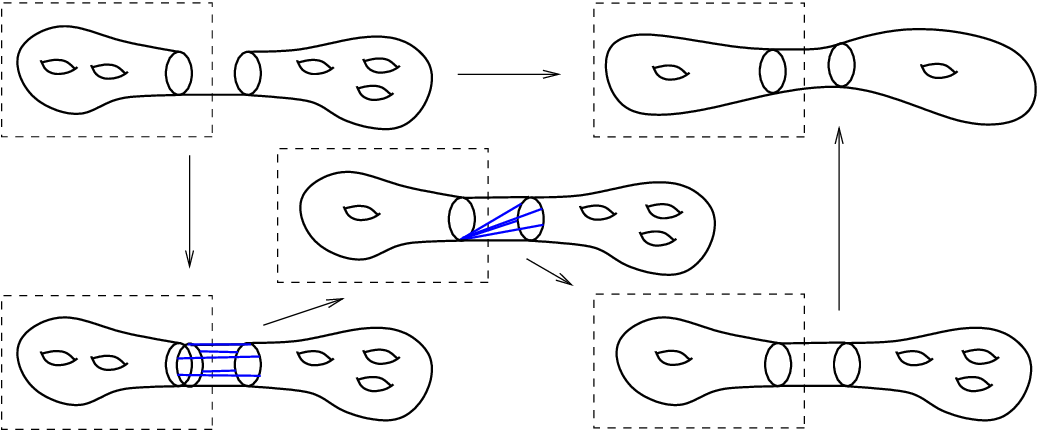}
}
\caption{The folding sequence for Example~\ref{ex::naive}}
\label{fig::naive}
\end{figure}

Note now that any sequence of this type must at some point give an
intermediate group that is not free. For instance, in the above
example, the map $A_2\onto A_3$ is a noninjective surjection of free
groups, and a primitive element $g$ in $A_2$ dies under the
map. However, it isn't clear from this sequence which, if any,
primitive elements in $A_1*B_1$ die under the map $A_1*B_1\onto G$:
the map $A_1*B_1\onto G_3$ doesn't visibly kill anything primitive
\emph{in the whole group}. This kind of folding sequence is
essentially the kind that Zieschang uses to prove
Theorem~\ref{maintheorem} for minimal generating systems: once a
folding sequence arrives at a free product of finite index subgroups
of vertex groups the minimality hypothesis guarantees that the
generating system is of the correct type. As we will see the way
around this is to allow unfoldings of graphs of groups, and to use a
target graph of groups decomposition which induces only HNN extensions
and not amalgamated products.

Rather than work directly with graphs of groups decompositions we
introduce a class of square complexes which provide geometric
representations of graphs of groups decompositions of generating
systems of closed surfaces. On this class we define a collection of
moves which correspond to folding, pulling, and vertex morphisms (and
some moves which do nothing to the associated graph of groups
decomposition). By applying the moves carefully we will build a
sequence of spaces which more and more closely approximate standard
generating systems. After each move we obtain a generating system
which is either a reduction of or equivalent to the system we started
with. Unlike in the Stallings/Bestvina, Feighn/Dunwoody scheme for
folding graphs of groups we must at some points perform moves which
correspond to unpulling edge groups. With respect to the correct
complexity this actually represents a simplification of the given
generating system, a fact that is not obvious when only considering
graphs of groups, where the complexity is typically just the
complexity of the underlying graph.

\section{Graphs of graphs}
\label{sec::graphsofgraphs}

A \emph{square complex} is a two-dimensional cube complex. A square
complex is a \emph{$\mathcal{VH}$--complex}
(\cite[Definition~4.1]{wise}) if the edges may be identified as either
\emph{horizontal} or \emph{vertical}, such that the edges of each
square alternate between horizontal and vertical. If $X$ is a
$\mathcal{VH}$--complex we call a hyperplane \emph{vertical} if each
intersection with a square is parallel to vertical edges. A
$\mathcal{VH}$--complex is a \emph{graph of graphs} if each vertical
hyperplane has a product collar neighborhood. A \emph{morphism of
  graphs of graphs} is a continuous map which preserves the
$\mathcal{VH}$ structure and sends vertices to vertices and interiors
of edges to interiors of edges.

The connected components of the union of vertical edges of a graph of
graphs are \emph{vertex spaces}, and each vertical hyperplane is an
\emph{edge space}.  A graph of graphs $X$ always has an
\emph{underlying graph}, $\Gamma_X$, which is obtained by collapsing
each vertex space to a point and projecting squares to their
horizontal edges. If $v$ is a vertex of $\Gamma_X$ we call the
associated vertex space $X_v$, and if $e$ is an edge of $\Gamma_X$ we
call the associated edge space $X_e$.

Since $X_e$ has a product collar neighborhood, there are natural
\emph{edge maps} $\tau\colon X_e\to X_{\tau(e)}$ and $\iota\colon
X_e\to X_{\iota(e)}$ induced by the projections to vertical edges; $X$
is completely determined by the collection $(\Gamma_X,\{\tau\colon
X_e\to X_{\tau(e)}\})$, where $e$ varies over all oriented edges in
$\Gamma_X$. Note that $\tau\colon X_e\to X_{\tau(e)}$ is the same map
as $\iota\colon X_{\overline e}\to X_{\iota(\overline e)}$. A graph of
graphs in this sense is a functor from a graph in the second sense to
the category whose objects are graphs and whose morphisms are
morphisms of graphs which map interiors of edges homeomorphically to
interiors of edges. We require that $\overline{\phantom{X}}\colon
X_e\to X_e$ be the identity map.

A morphism of graphs of graphs $\varphi\colon X\to X'$ induces a
morphism of underlying graphs $\Gamma_X\to\Gamma_{X'}$. Denote the
restriction $\varphi\vert_{X_v}\colon X_v\to X'_{\varphi(v)}$ by
$\varphi_v$, likewise for edge spaces: $\varphi\vert_{X_e}$ is denoted
$\varphi_e$.

A graph of graphs \emph{over $Y$} is simply a graph of graphs with a
map $X\to Y$. Suppose $(X,\varphi)$ and $(X',\varphi')$ are graphs of
graphs over $Y$. A morphism $\sigma\colon X\to X'$ a \emph{morphism of
  graphs of graphs over $Y$} if $\varphi'\circ\sigma=\varphi$. Rather
than work with arbitrary generating systems for $\pi_1(Y)$ we consider
the category whose objects are graphs of graphs over $Y$ and whose
morphisms are morphisms of graphs of graphs over $Y$.

Let $X$ be a graph of graphs and let $(\Gamma_X,\{\tau\colon X_e\to
X_{\tau(e)}\})$ be the data determining $X$. There is an associated
graph of groups $\Delta_X$ determined by $(\Gamma_X,\{\tau_*\colon
\pi_1(X_e)\to \pi_1(X_{\tau(e)})\})$. If $\sigma\colon X\to Y$ is a
morphism of graphs of graphs then there is an obvious associated
morphism $\Delta_X\to\Delta_Y$.

\section{Surfaces are graphs of graphs}
\label{sec::surfaces}

To show that closed surfaces of nonpositive Euler characteristic are
graphs of graphs it suffices to find, for each $n\leq 0$, a graph
$V_n$ with $\chi(V_n)=n$, and a pair of immersions $\iota,\tau\colon
S^1\to V_n$ of the same length, such that every edge of $V_n$ is the
image of exactly two edges under $\iota$ and $\tau$.

\begin{lemma}
  Let $\Sigma$ be a closed surface with $\chi(\Sigma)\leq 0.$ Then
  $\Sigma$ may be given the structure of a graph of graphs with
  exactly one vertex space. In particular each vertex space has at
  least two incident edge spaces.
\end{lemma}

\begin{proof}
  See Figures~\ref{surfaceisgraphofgraphs} and~\ref{fig::glue}. Let $S_0$
  be either $A$ (even $\chi(\Sigma)$) or $M$ (odd $\chi(\Sigma)$), and
  let $R_0$ be the rectangle labeled $R$, and set $S_i=(S_{i-1}\sqcup
  T)/(R=R')$, identifying squares as indicated in the figure. Finally,
  set $R_i=N$. There is a retraction of $S_i$ to the core graph $V$
  (indicated by the thin black lines). The solid and dashed curves,
  $b_1$ and $b_2$, become immersions with the same length under the
  retraction, and each edge of the core graph is traversed twice by
  $b_1$ and $b_2$. Glue the boundary of a subdivided annulus $A$ to
  $V$ via $b_1$ and $b_2$.
\end{proof}

For the remainder of the paper, ``surface'' will mean topological
surface with a fixed graph of graphs structure such that each edge
space is nonseparating. 

\begin{figure}[ht]
\labellist
\pinlabel {$A$} [br] at 32 163
\pinlabel {$M$} [br] at 317 155
\pinlabel {$T$} [bl] at 608 170
\pinlabel \rotatebox{-90}{$R$} at 448 104
\pinlabel \rotatebox{-90}{$R$} at 161 94
\pinlabel {$R'$} at 686 89
\pinlabel {$N$} at 503 115
\endlabellist
\centerline{%
  \includegraphics[width=.95\textwidth]{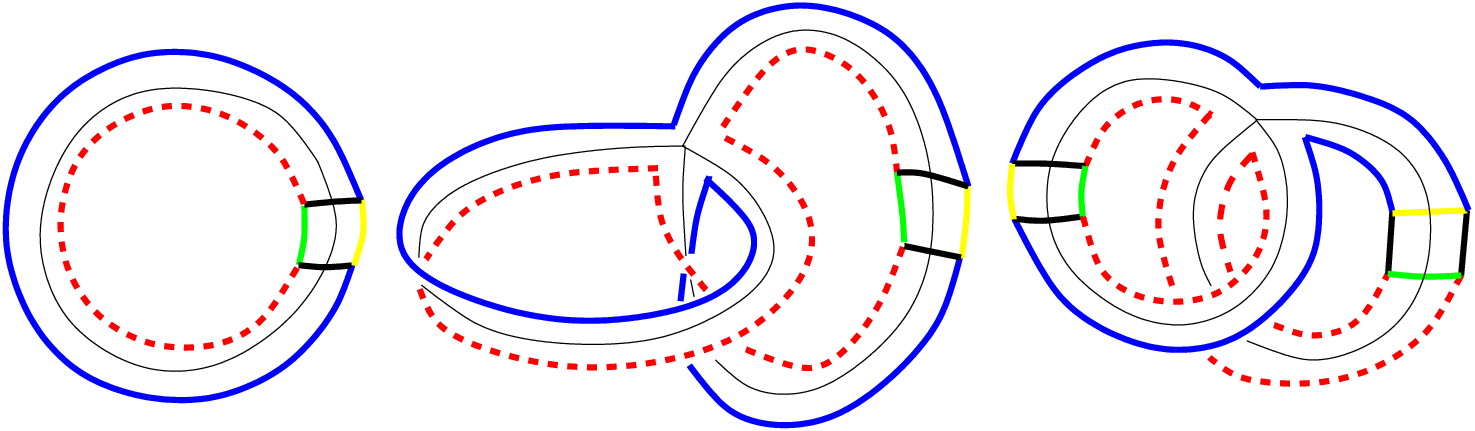}
}
\caption{The building blocks for surfaces.}
\label{surfaceisgraphofgraphs}
\end{figure}

\begin{figure}[ht]
\labellist
\pinlabel {$S_{i-1}\sqcup T$} [b] at 163 155
\pinlabel {$S_i$} [bl] at 440 155
\pinlabel {$R_i$} at 334 95 
\pinlabel {$N$} at 20 95
\endlabellist
\centerline{%
  \includegraphics[width=.95\textwidth]{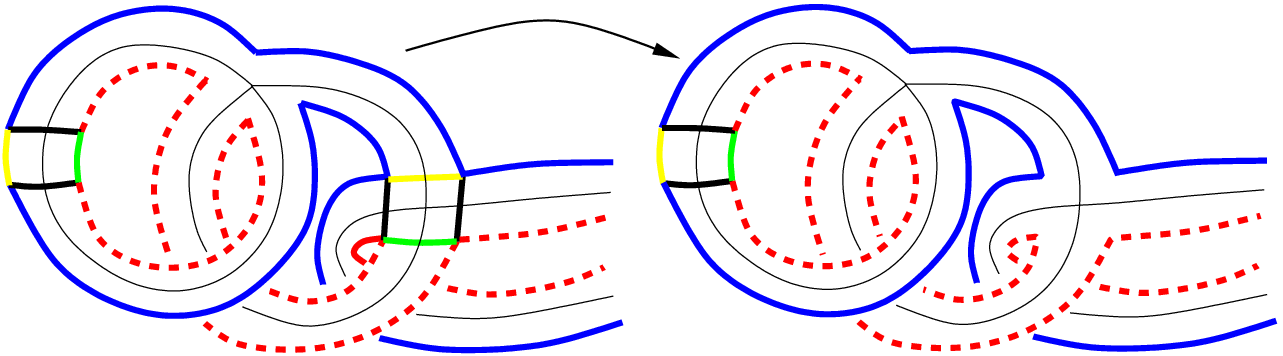}
}
\caption{Gluing the building blocks for surfaces together.}
\label{fig::glue}
\end{figure}

Let $\Sigma$ be a closed surface and let $\Sigma'$ be the graph of
graphs obtained by replacing an edge space $\Sigma_e$ by a vertex
$w\in\Sigma_e$. Then for some $p$, $\Sigma'\subset\Sigma$ is a strong
deformation retract of $\Sigma\setminus p$ and $(\Sigma',i)$ is a
minimal generating system.

\section{Coverlike graphs of graphs}

\begin{definition}
  Let $(X,\varphi)$ be a graph of graphs over a surface $\Sigma$. We
  say that $\varphi$ \emph{folds} the squares $s\neq s'$ if $s$ and
  $s'$ share an edge and have the same image in $Y$.  A graph of
  graphs $(X,\varphi)$ over $\Sigma$ is \emph{coverlike} if
  \begin{itemize}
  \item the one-skeleton of $X$ doesn't have any valence one vertices,
  \item $\varphi$ doesn't fold squares, and
  \item if $X_v\to \Sigma_{\varphi(v)}$ is not injective on
    fundamental group then $(X_v,\mathcal{E})$, where $\mathcal{E}$ is
    the collection of incident edge maps which are circular, is
    collapsible.
  \end{itemize}
\end{definition}

The second condition implies, in particular, that the restrictions
$\varphi_e\colon X_e\to\Sigma_{\varphi(e)}$ are immersions. Since edge
spaces in $\Sigma$ are circles this implies that edge spaces in $X$
are either circles or intervals.

\begin{definition}
  A map $\varphi\colon X\to\Sigma$ is \emph{pointwise injective} if,
  for all vertices $v$ of $\Gamma_X$, $\varphi_v$ is $\pi_1$
  injective. We say that $\varphi\colon X\to \Sigma$ is \emph{an
    immersion at $v$} if $\varphi_v\colon X_v\to \Sigma_{\varphi(v)}$
  is an immersion of graphs.  If $\varphi$ is coverlike and an
  immersion at all vertices then $\varphi$ is a \emph{pointwise
    immersion}.
\end{definition}

Let $\Sigma$ be a surface, and suppose $\varphi\colon X\to \Sigma$ is
coverlike and an immersion at $\tau(e)$. Let $\tau\colon X_e\to
X_{\tau(e)}$ be an edge map. There is
an edge space $\Sigma_{\varphi(e)}$ incident to
$\Sigma_{\varphi(\tau(e))}$ such that
\[\varphi_v\circ\tau_e=\tau_{\varphi(e)}\circ\varphi_e\]
The edge space $X_e$ is connected there is therefore a unique
connected component $\delta X_e$ of
$X_{\tau(e)}\times_{\Sigma_{\varphi(\tau(e))}}\Sigma_{\varphi(e)}$
containing the image of $X_e$.

Suppose $X_e$ is a point. Let $Y$ be the graph of graphs obtained by
replacing $X_e$ by $\delta X_e$ and $X_{\iota(e)}$ by
$X_{\iota(e)}\vee_{\iota(X_e)}\delta X_e $. There is a natural map
$\psi\colon Y\to\Sigma$, and $\varphi$ factors as the inclusion
$X\into Y$ followed by $\psi$. See Figure~\ref{fig::pulling}.

\begin{definition}
\label{def::pulling}
The space $Y$ is said to be obtained by \emph{pulling
  $\delta X_e$ across $e$}.
\end{definition}

The inclusion $X\into Y$ induces a morphism of graphs of groups
$\Delta_X\to\Delta_Y$ which is given by pulling a cyclic edge group
across a trivial edge. 

\begin{figure}[ht]
  \centerline{\includegraphics[width=.9\textwidth]{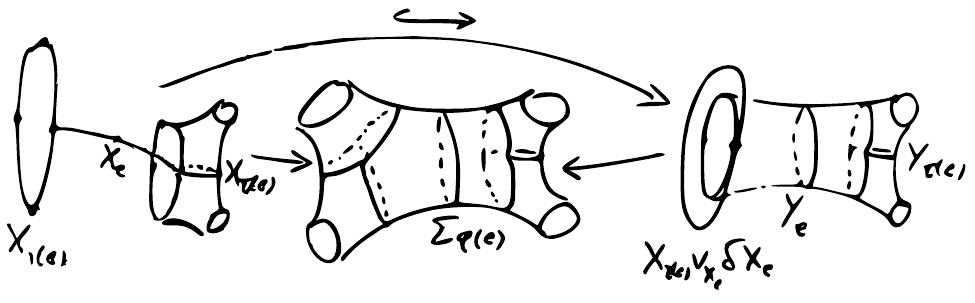}}
  \caption{Pulling $\delta X_e$ across $e$. In this case
    $\delta X_e$ is a circle.}
  \label{fig::pulling}
\end{figure}

The \emph{complexity} of a coverlike graph of graphs $(X,\varphi)$
over $\Sigma$ is the lexicographically ordered pair $\Comp(X)\define
(\rk(X),\vert X\vert)$, where $\vert X\vert$ is the number of edge
spaces in $X$. The complexity $\Comp(X)$ turns the set of coverlike graphs
of graphs into a poset. If $(X,\varphi)$ and $(X',\varphi')$ are
graphs of graphs over $Y$ and $\sigma\colon X\to X'$ is a reduction,
i.e., $\rk(X)>\rk(X')$, we write $X\succ X'$, and if $\sigma$ is an
equivalence, i.e., $\rk(X)=\rk(X')$, and the induced map of underlying
graphs $\sigma\colon\Gamma_X\to\Gamma_{X'}$ is an isomorphism of
graphs we write $X\simeq X'$. In this case $X$ and $X'$ are equivalent
and have the same number of edge spaces.

\section{Proof of Theorem~\ref{maintheorem}}

The following four subsections contain the argument used to prove
Theorem~\ref{maintheoremii}. In
\ref{subsec::decreaseedges}-\ref{subsec::findingobvious} we describe
three ``moves'' which take a generating system as input and give as
output equivalent, or simpler, generating systems which more closely
approximate standard ones. We summarize the main steps below.

\begin{enumerate}
\item[\ref{subsec::decreaseedges}] In this subsection we find
  conditions which guarantee that, given a coverlike generating
  system, a new equivalent or reduced coverlike generating system with
  fewer edge spaces can be found. The moves in this section are
  applied as aggressively as possible. If ever, in the course of
  transforming graphs of graphs, either of Lemma~\ref{foldablepair} or
  Lemma~\ref{lem::twosamedelta} is applicable, it is applied, and the
  entire folding process is restarted with their output as the new
  input. 

  {\bf We note that the \emph{only} time a coverlike generating system
    is allowed to have an edge space which is not a point or a circle
    is in Lemma~\ref{foldablepair}. The moves in this subsection are
    all folds on the level of graphs of groups.}

\item[\ref{subsec::increasingcircular}] In this subsection, since we
  assume it isn't possible to reduce the number of edge spaces, we try
  to increase the number of circular edge spaces so that we eventually
  find a non-pointwise injective coverlike generating system. In this
  subsection all generating systems \emph{except} for the output,
  which is fed into~\ref{subsec::findingobvious}, are pointwise
  injective. On the level of graphs of groups, unless a fold occurs,
  all moves are pulling cyclic subgroups across trivial edge groups.
\item[\ref{subsec::findingobvious}] In this subsection we take as
  input the output of~\ref{subsec::increasingcircular} and, assuming
  the number of edge spaces cannot be decreased, produce a generating
  system with an ``obvious relation'' and fewer circular edge
  spaces. On the level of graphs of groups all moves in this section
  are either vertex morphisms which yield free quotients (unlike those
  appearing in Example~\ref{ex::naive}) or are unpulling cyclic edge
  groups to reveal trivial edge groups.
\item[\ref{subsec::structureofminimal}] In this subsection we show
  that minimal complexity coverlike generating systems with obvious
  relations and as few circular edge spaces as possible are
  essentially finite sheeted covers. We then argue that they represent
  standard generating systems, otherwise we could have used
  \S\ref{subsec::findingobvious} or \S\ref{subsec::decreaseedges} to
  find either a generating system of strictly lower complexity or a
  generating system of equal complexity, but with an obvious relation
  and strictly fewer circular edge spaces.
\end{enumerate}

\subsection{Decreasing the number of edge spaces}
\label{subsec::decreaseedges}

We say that an edge (either horizontal or vertical) $h$ in $X$ is a
\emph{free face} if $h$ is contained in \emph{exactly} one square. A
vertical edge $h$ is a free face if and only if it is traversed
exactly once by exactly one incident edge map, and a horizontal edge
is a free face if and only if it corresponds to a valence one vertex
of an edge space. If $a$ is a vertex in an edge space denote the
associated horizontal edge in $X$ by $\left[a\right]$. Note that all
horizontal edges correspond to vertices in edge spaces, and all
squares correspond to edges in edge spaces.

\begin{lemma}[Shenitzer Theorem]
  \label{lem::isfreeface}
  If $(X,\varphi)$ is free, doesn't fold squares, and contains a
  square, then $X$ has a free face.
\end{lemma}

\begin{proof}
  If $X$ doesn't have a free face then there is a closed surface $S$
  and an immersion $S\to X$ which is one-to-one on squares. Since $X$
  is free, $S$ is a sphere. Every vertex in $S^1$ has even valence,
  and by Gauss-Bonnet there is a valence two vertex where two squares
  $s$ and $s'$ meet; $s$ and $s'$ share adjacent vertical and
  horizontal edges, and therefore have the same image in $\Sigma$, but
  this implies that $\varphi$ folds $s$ and $s'$.
\end{proof}

Let $\left[b\right]$ be a horizontal free face in $X$. Let $Y\simeq X$
be the graph of graphs obtained by first restricting $X_e$ to $a$ and
then removing valence one vertices from the one skeleton. The
inclusion map $Y\into X$ is a homotopy equivalence, and we say that
$Y$ is obtained from $X$ by \emph{collapsing the edge space onto $a$.}
The inclusion $Y\into X$ has no effect on the associated graph of
groups decompositions: $\Delta_Y\to \Delta_X$ is an isomorphism of
graphs of groups. 

Let $f$ be a vertical free face traversed by an edge $f'$ in
$\tau\colon X_e\to X_v$, and let $a\in X_e$ be a vertex. Let $Y\simeq
X$ be the graph of graphs obtained by removing the interiors of $f'$
and $f$ from $X_e$ and $X_v$, respectively. Then $Y_e$ is either a
single vertex or an interval. In the latter case, collapse $Y_e$ onto
$a$ and remove valence one vertices from the one-skeleton to obtain
$Z\simeq Y$. We say that $Z$ is obtained by \emph{collapsing the
  annulus from $f$ to $a$}. The natural map $Z\into X$ is a homotopy
equivalence. The map $\Delta_Z\to \Delta_X$ is clearly given by
pulling a cyclic group across a trivial edge group, and $\Delta_Z$ is
therefore obtained by \emph{unpulling} a cyclic edge group to reveal a
trivial edge group.


\begin{lemma}
  \label{lem::shenitzer}
  Let $(X,\varphi)$ be as in Lemma~\ref{lem::isfreeface}. For each
  edge space $X_e$ let $a_e$ be a distinguished vertex and let
  $A=\cup_e\left[a_e\right]\subset X^1$. There is a graph $W\subset
  X^1$ containing $A$ such that $\vert A\vert=\vert W\vert=\vert
  X\vert$ and the inclusion $W\into X$ is a homotopy equivalence.
\end{lemma}

\begin{proof}
  The proof is by induction on the number of squares in $X$. If $X$
  doesn't have any squares there is nothing to prove, so suppose $X$
  contains a square. Then by Lemma~\ref{lem::isfreeface} there is a
  free face $h$.  If $h$ is horizontal then it corresponds to a
  valence one vertex in $X_e$ for some $e$; $X_e$ is an interval, and
  we collapse $X_e$ onto $a_e$. Repeat until there are no horizontal
  free faces. If $h$ is vertical, then it's in the image $X_e\to
  X_{\tau(e)}$ for some $e$, with $X_e$ circular. Collapsing the
  annulus from $h$ to $a_e$ preserves the number of edge spaces and
  reduces the number of circular edge spaces. Repeat until there are
  no squares remaining. The resulting graph is the desired $W$.
\end{proof}

\begin{lemma}
  \label{foldablepair}
  Let $(X,\varphi)$ be coverlike and free. Suppose $\varphi$ folds a
  pair of horizontal edges $\left[a_e\right]$ and $\left[a_{f}\right]$
  corresponding to distinct edge spaces $X_e$ and $X_f$ of $X$. Then
  there is a graph $Z$ over $\Sigma$ with $X\succ Z$.
\end{lemma}

\begin{proof}
  Complete $\{a_e,a_f\}$ to a collection of horizontal edges, one from
  each edge space, and let $W\subset X^1$ be as in
  Lemma~\ref{lem::shenitzer}. The induced map $W\to \Sigma$ folds
  $\left[a_e\right]$ and $\left[a_{f}\right]$. Fold them together to
  obtain $Z$. Clearly $\vert Z\vert<\vert X\vert$.
\end{proof}

Note that the map $W\to Z$ is an equivalence if and only if
$\left[a_e\right]$ and $\left[a_f\right]$ don't share both
endpoints. The morphism of associated graphs of groups
$\Delta_W\to\Delta_Z$ is a fold.

\begin{lemma}
  \label{lem::twosamedelta}
  Let $(X,\varphi)$ be coverlike, free, and a pointwise immersion. If
  there are edges $e$ and $f$ such that $\delta X_e$ and $\delta X_f$
  agree then there is a graph $Z$ with $X\succ Z$.
\end{lemma}

\begin{proof}
  If $X_e$ is a circle, and $\delta X_e$ and $\delta X_f$ agree then
  $X_f$ is a point, otherwise $X\to\Sigma$ folds squares. Then
  $\varphi$ folds horizontal edges and, by Lemma~\ref{foldablepair},
  there is a graph $Z$ such that $X\succ Z$.

  We may therefore assume that both $X_e$ and $X_f$ are points, and
  that if $\delta X_g=\delta X_f$ then $X_g$ is a point. Pull $\delta
  X_e$ across $e$ to obtain a new coverlike $(Y,\psi)$ equivalent to
  $X$. Now $\psi$ folds horizontal edges in distinct edge
  spaces. Apply Lemma~\ref{foldablepair}.
\end{proof}

\subsection{Increasing the number of circular edge spaces}
\label{subsec::increasingcircular}

Given a pointwise immersion $X\to\Sigma$ we can either find a
reduction of $X$ or make $X$ look more like a surface by increasing
the number of circular edge spaces.

\begin{lemma}
  \label{lem::makenotinjective}
  Let $(X,\varphi)$ be coverlike, free, and a pointwise
  immersion. Then either
  \begin{itemize}
  \item there is a graph $Z$ with $X\succ Z$ or
  \item $X\simeq X'$ with a collapsible vertex space $X'_v$ such that
    the map $\varphi_v\colon X'_v\to\Sigma_{\varphi'(v)}$ is not
    $\pi_1$--injective.
  \end{itemize}
\end{lemma}

First we prove some auxiliary lemmas.

\begin{lemma}[\emph{cf.} {\cite[Theorem~2.1]{dunwoody}, \cite[Lemma~3.1]{stallings::binding}}]
  \label{lem::stallingsdunwoody}
  Let $\varphi\colon X\to\Sigma$ be a pointwise immersion. If
  $\varphi$ isn't $\pi_1$ injective then either
  \begin{itemize}
  \item There is a vertex $v$ and distinct edge spaces $X_e$ and $X_f$
    incident to $X_v$ such that $\delta X_e=\delta X_f$, or
  \item There is an edge space $X_e$ which is a point, such that
    $\delta X_e$ is a circle.
  \end{itemize}
\end{lemma}

The proof is an adaptation of Stallings' method of binding ties to
morphisms of coverlike graphs of graphs, and is the analogue for
pointwise immersions of Stallings' observation that immersions of
graphs are $\pi_1$--injective. Roughly, the lemma says that if no
element of the fundamental group of a vertex space dies (guaranteed by
$\varphi$ being a pointwise immersion), and if no two distinct edge
spaces can be folded together (bullet one), then it's possible to
enlarge an edge space to a circle (bullet two). Roughly, in the
language of graphs of groups, if the map $\Delta_X\to\Delta_{\Sigma}$
doesn't fold distinct edge groups, and if there are no possible vertex
morphisms, then a cyclic group can be pulled across a trivial edge
group.


\begin{proof}[Proof of Lemma~\ref{lem::stallingsdunwoody}.]
  Assume the first case doesn't hold. Let $c\colon \mathbb{S}^1\to X$
  be a shortest edge path in the one-skeleton of $X$ such that
  $\varphi\circ c$ is nullhomotopic in $\Sigma$, and let $h\colon
  D\to\Sigma$ be a map of a least area square singular planar square
  complex homotopy equivalent to a disk such that $h\vert_{\partial
    D}$ lifts to $c$. Note that if there is a curve $c$ bounding a
  disk $D$ with no area then $c$ is an edge path in $X_v$ for some $v$
  and the map $X_v\to\Sigma_{\varphi(v)}$ isn't injective, contrary to
  hypothesis. We may also assume that if two squares in $D$ share an
  edge then they are not folded together by $h$, otherwise we may find
  a disk of lower area (perhaps bounding a shorter curve with
  nullhomotopic image). Let $\Lambda$ be the preimage of the edge
  spaces of $\Sigma$ under $h$. Since $D$ has least area, the
  connected components of $\Lambda$ are arcs connecting $\partial D$
  to itself. Furthermore, $h\vert_{\Lambda}\colon\Lambda\to
  \sqcup_e\Sigma_e$ is an immersion. Let $\lambda$ be an outermost arc
  in $\Lambda$. See Figure~\ref{fig::outermostarc}. Let $a$ and $b$ be
  the vertices of $X_e$ and $X_f$ corresponding to the endpoints of
  $\lambda$. Since $\lambda$ is connected, and by definition of the
  pullback, $\delta X_e=\delta X_{f}$. Since we aren't in the first
  case of the lemma $a=b$, $e=f$, the endpoints of $\lambda$ coincide
  and $\delta X_e$ is a circle. If $X_e$ is a circle, let $D'$ be the
  disk obtained by removing squares meeting $\lambda$. Then, since
  $a=b$, the boundary of $D'$ lifts to a shorter curve with
  nullhomotopic image, contradicting our choice of $c$.
\end{proof}

\begin{figure}
\centerline{\includegraphics[width=.5\textwidth]{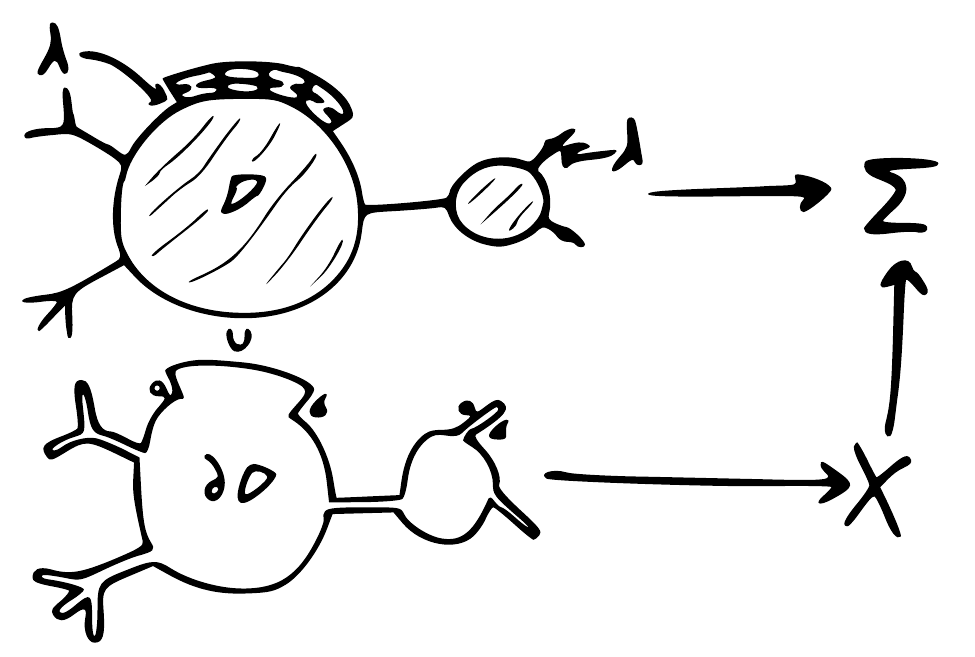}}
\caption{An outermost arc $\lambda$ in $D$.}
\label{fig::outermostarc}
\end{figure}

\begin{lemma}
  \label{lem::pullingfd}
  Let $(Y,\varphi)$ be a pointwise immersion, and suppose that
  Lemma~\ref{lem::twosamedelta} is not applicable, i.e., if $\delta
  Y_e$ and $\delta Y_f$ agree then $e=f$. Then for some $e$, $Y_e$ is
  a point, $\delta Y_e$ is a circle, and the graph of graphs $X$
  obtained by pulling $\delta Y_e$ across $e$ is
  coverlike. Furthermore, $\vert Y\vert_c<\vert X\vert_c$.
\end{lemma}

Note that we have swapped the roles of $Y$ and $X$ to keep the
notation consistent with the way Lemma~\ref{lem::pullingfd} is used in
the proof of Lemma~\ref{lem::makenotinjective}.

\begin{proof}
  By Lemma~\ref{lem::stallingsdunwoody} there is an edge space $Y_e$
  which is a point with $\delta Y_e$ a circle. Suppose
  $X_{\iota(e)}$ is not collapsible. Since
  $X_{\iota(e)}=Y_{\iota(e)}\vee_{\iota(Y_e)}\delta Y_e$, there is some
  subgraph $W$ of $Y_{\iota(e)}$ for which every edge is crossed by
  two circular incident edge maps. Since $Y\to\Sigma$ is a pointwise
  immersion this implies $Y_{\iota(e)}\to\Sigma$ is a finite sheeted
  cover and that $W=Y_{\iota(e)}$. Since $Y_e$ is a point there is an
  incident edge map $Y_f\to Y_{\iota(e)}$ such that $\delta
  Y_{\overline e}$ and $\delta Y_f$ agree, contradicting the
  assumption that Lemma~\ref{lem::twosamedelta} is not applicable. See
  Figure~\ref{fig::ifnotcollapsible}. Hence $X$ is coverlike. Clearly
  $\vert Y\vert_c<\vert X\vert_c$.
\end{proof}

\begin{figure}
  \centerline{\includegraphics[width=.8\textwidth]{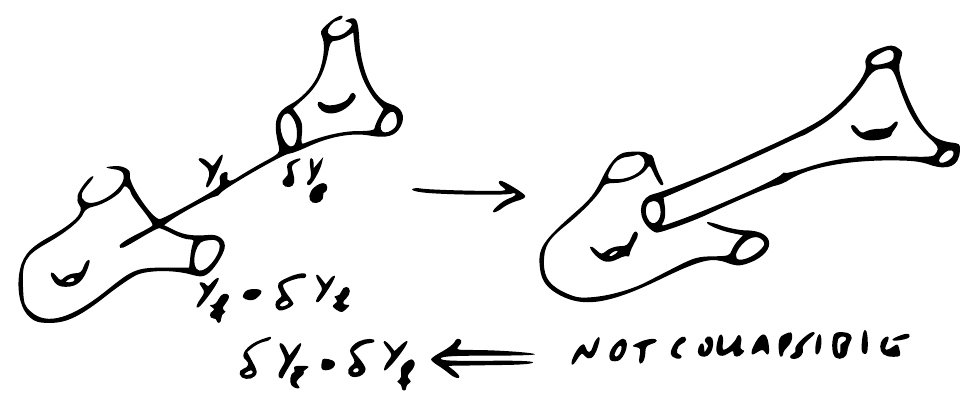}}
  \caption{If $X$ is not coverlike then distinct edge spaces can be
    folded together.}
  \label{fig::ifnotcollapsible}
\end{figure}

Suppose $X\to \Sigma$ is pointwise $\pi_1$ injective but not an
immersion at some vertex $v$. The vertex map $\varphi_v\colon X_v\to
\Sigma_{\varphi(v)},$ by Theorem~\ref{stallings}, factors through a
folding sequence
\[
X_v=W_0\fontop{\phi_0}W_1\fontop{\phi_1} W_2\to\dotsb\to W_k\immerses Y_{\varphi(v)}
\]
By replacing $X_v$ by $W_i$ and composing maps, we obtain a sequence
of graphs of graphs $(X^i,\varphi_i)$ over $\Sigma.$ At each stage,
there is a morphism $\varepsilon_i\colon X^i\to X^{i+1}$ induced by
$\phi_i.$ If $\varepsilon_i$ is a homotopy equivalence then $\phi_i$
is, and vice-versa.  The last map $\varphi_k\colon X^k\to Y$ is an
immersion at $v$. Folding in vertex spaces may introduce valence one
vertices to the one-skeleton. We remove valence one vertices and
incident edges without comment.

\begin{lemma}
  \label{lem::foldingkeepscoverlike}
  If $(X,\varphi)$ is coverlike and $\varphi_v$ is $\pi_1$ injective
  then folding $X_v$ to an immersion gives a coverlike generating
  system $(Y,\psi)$.
\end{lemma}

Recall that we are assuming in this section (and everywhere outside
Lemma~\ref{foldablepair}) that all edge spaces are either points or
circles.

\begin{proof}
  Let $\mathcal{E}$ be the collection of circular incident edge spaces
  at $v$.

  If $(X_v,\mathcal{E})$ is not reducible there is a connected
  subgraph $W\subset X_v$ which contains the image of each element of
  $\mathcal{E}$, such that $W\to\Sigma_{\varphi(v)}$ is a finite
  sheeted cover, $X_v$ is $W$ with finitely many finite trees
  attached, and $X_v$ folds down to $W$. If any squares fold then we
  would have had to fold a pair of edges in $W$, which we didn't.

  If $(X_v,\mathcal{E})$ is reducible and $Y$ is not coverlike then
  there are two circular edge spaces $X_e$ and $X_f$ incident to $X_v$
  which don't fold in $X$ (hence they represent distinct conjugacy
  classes of maximal cyclic subgroups in $X_v$) such that $Y_e$ and
  $Y_f$ fold in $Y$ (hence represent indistinct conjugacy classes of
  maximal cyclic subgroups in $Y_v$). Since
  $Y_v\to\Sigma_{\varphi(v)}$ is an immersion, $Y_e$ and $Y_f$ have
  the same image in $Y$, contradicting
  Lemma~\ref{lem::foldcollapsible} applied to $(X_v,\mathcal{E})$.
\end{proof}

\begin{proof}[Proof of Lemma~\ref{lem::makenotinjective}]

  Set $X^0=X$. Suppose $X^i$ has been defined. If $X^i\to\Sigma$ is
  not pointwise injective set $X'=X^i.$ Otherwise fold vertex spaces
  to immersions to obtain $Y$. Then by
  Lemma~\ref{lem::foldingkeepscoverlike} $Y\to\Sigma$ is
  coverlike. (This step is redundant for $i=0$, as $X\to\Sigma$ is
  already a pointwise immersion.) If $Y_e$ and $Y_f$ are incident to
  $Y_v$ and $\delta Y_e=\delta Y_f$ then Lemma~\ref{lem::twosamedelta}
  is applicable and there is a graph $Z\prec Y$. Otherwise, by
  Lemma~\ref{lem::pullingfd} there is an edge $e$ such that $Y_e$ is a
  point, $\delta Y_e$ is a circle and $Y_{\iota(e)}$ is
  collapsible. Let $X^{i+1}$ be the space obtained by pulling $\delta
  Y_e$ across $e$. Since $\vert X^i\vert_c<\vert
  X^{i+1}\vert_c\leq\vert X\vert$, for some $i$, $X^{i+1}\to\Sigma$ is
  not pointwise injective.
\end{proof}


\subsection{Finding obvious relations}
\label{subsec::findingobvious}

\begin{definition}
  Suppose $X\to\Sigma$ is coverlike, free, and a pointwise
  immersion. If $X_e$ is a point and $\delta X_e$ and $\delta
  X_{\overline e}$ are both circles then $X$ \emph{has an obvious
    relation}.
\end{definition}

Generating systems $X\to\Sigma$ which aren't pointwise injective are
particularly important. In this case we fold pairs of vertical edges
so long as the induced graphs of graphs stay in the category of
coverlike generating systems. At some point in the folding sequence
the induced maps of graphs of graphs either fold horizontal edges or
aren't homotopy equivalences. In the first case we can either reduce
the number of edge spaces (first bullet) or find obvious relations
(second bullet), and in the second case we argue that the generating
system was either reducible (first bullet), equivalent to one with an
obvious relation and fewer circular edge spaces (second bullet), or
equivalent to a standard one to begin with (third bullet).

\begin{lemma}
  \label{lem::npithenor}
  Let $(X,\varphi)$ be coverlike, and suppose that $\varphi$ is not
  pointwise injective at exactly one vertex space. Then either
  \begin{itemize}
  \item there is a graph $Z$ with $X\succ Z$ 
  \item $X\simeq X'$ with an obvious relation and $\vert X'\vert_c<
    \vert X\vert_c$ or
  \item $X\simeq Y$ with $Y$ standard.
  \end{itemize}
\end{lemma}

Hilfssatz~5 of~\cite{zieschang} is a (very) special case of this
lemma. We first prove some auxiliary lemmas.

\begin{lemma}
  \label{lem::ifbranchedcover}
  If $\varphi\colon X\to \Sigma$ is a branched cover then $\varphi$
  is either a cover or folds horizontal edges.
\end{lemma}

\begin{proof}
  Suppose $\varphi$ is not a cover, and let $p$ be a nontrivial branch
  point in $X$. A neighborhood of $p$ maps to a neighborhood of
  $\varphi(p)$ like the map $z\mapsto z^k$ for some $k>1$. If $e$ is a
  horizontal edge incident on $\varphi(p)$ then there are edges
  $e_1,\dotsc,e_k$ incident to $p$ which map to $e$, but this is
  precisely what it means for $\varphi$ to fold horizontal edges.
\end{proof}



Coverlike generating systems $X\to \Sigma$ which fold edges in
distinct horizontal edge spaces are either reducible or equivalent to
coverlike generating systems with fewer edge spaces. The situation is
somewhat different for generating systems which fold horizontal edges
from the same edge space. Suppose $X\to\Sigma$ is coverlike. We say
that $X_e$ \emph{self-folds} if there are vertices $a$ and $b$ in
$X_e$ such that $\varphi(a)=\varphi(b)$ and $\tau(a)=\tau(b)$. The
edges $\left[a\right]$ and $\left[b\right]$ share endpoints and have
the same image in $\Sigma$. 

\begin{lemma}
  \label{lem::selffoldnotinjective}
  Let $(X,\varphi)$ be coverlike, and suppose that $X_e\to X_v$
  self-folds. Then the map $X_v\to\Sigma_{\varphi(v)}$ is not
  injective on fundamental group.
\end{lemma}

\begin{proof}
  We keep the notation from above. Since $\varphi$ folds
  $\left[a\right]$ and $\left[b\right]$, we have
  $\varphi(a)=\varphi(b)$ and $\tau(a)=\tau(b)$. Hence both maps
  $X_e\to X_v$ and $X_e\to\Sigma_{\varphi(e)}$ factor through the
  quotient $R=X_e/\{a\sim b\}$. The map $R\to X_v$ has nonabelian
  image, but the maps $R\to X_v\to \Sigma_{\varphi(v)}$ and $R\to
  \Sigma_{\varphi(e)}\to \Sigma_{\varphi(v)}$ agree, but
  $\Sigma_{\varphi(e)}$ is circular, therefore the nonabelian subgroup
  of $X_v$ carried by $R$ has abelian image in $\Sigma_{\varphi(v)}$
\end{proof}

\begin{lemma}
  \label{lem::foldinsameedgespace}
  Suppose that $\varphi$ folds a pair of horizontal edges
  $\left[a\right]$ and $\left[b\right]$, with $a,b\in X_e$ and
  $\tau(a)=\tau(b)$, that $Y$ is obtained from $X$ by an annulus
  collapse from $v$, and that $Y\to\Sigma$ is pointwise
  injective. Then $Y$ folds down to $Z\to\Sigma$ with an obvious
  relation.
\end{lemma}

\begin{proof}
  Let $h$ be the edge participating in the annulus collapse from
  $v$. By Lemma~\ref{lem::selffoldnotinjective} the annulus collapse
  must correspond to $X_e$, otherwise the map $Y_v\to
  \Sigma_{\varphi(v)}$ isn't $\pi_1$ injective.

  Let $f$ be the single edge of $X_e$ that maps to $h$, and write
  $X_e\to X_v$ as a composition of reduced edge paths $S_1S_2$, where
  $S_1$ traverses $X_e$ from $a$ to $b$, missing $f$, and $S_2$ from
  $b$ back to $a$, traversing $f$. Since $Y\to\Sigma$ is pointwise
  injective and doesn't fold squares, it folds down to a pointwise
  immersion $Z\to\Sigma$. Then $\delta Z_e=S_1/\{a\sim b\}$ and
  $\delta Z_{\overline{e}}=\delta X_{\overline{e}}$. See
  Figure~\ref{fig::unpulltoor}
\end{proof}  

\begin{figure}[ht]
\centerline{
  \includegraphics[width=.8\textwidth]{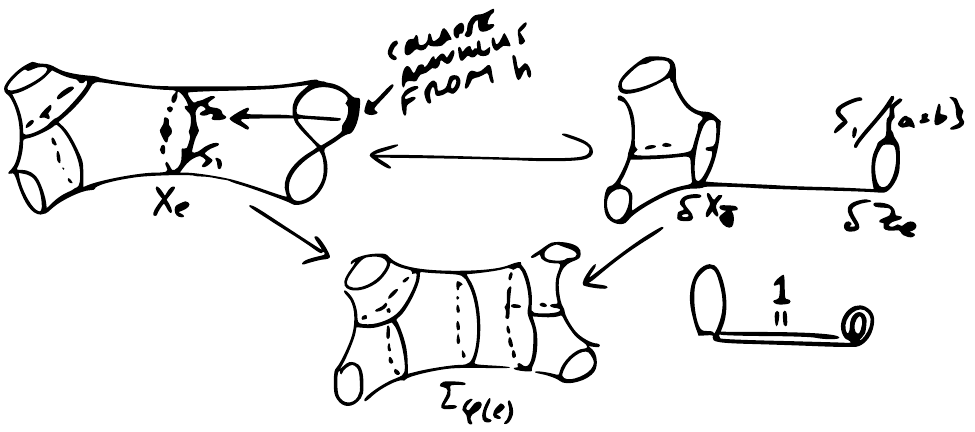}
}
\caption{Collapsing an annulus to reveal an obvious relation.}
\label{fig::unpulltoor}
\end{figure}


Let $\varphi\colon X\to\Sigma$ be injective at $v\in\Gamma_X$, and
let $f_1,\dotsc,f_n$ be the oriented edges of $\Gamma_{\Sigma}$
incident to $w=\varphi(v)$. Suppose that $X_v$ contains a subgraph $W$
such that the map $W\into X_v\to\Sigma_{\varphi(v)}$ is a finite
sheeted cover. Let $e^i_j$ be the oriented edges of $\Gamma_X$
incident to $v$ such that $\varphi(e^i_j)=f_i$ and $X_{e^i_j}$ has
image contained in in $W$. We say that $X_v$ is \emph{full} if, for
each $j$, the natural map
\[
\sqcup_j X_{e^i_j}\to X_{v}\times_{\Sigma_w}\Sigma_{f_i}
\]
is surjective on the set of connected components. Note that if
$X_v\to\Sigma_{\varphi(v)}$ is $\pi_1$--injective then there is no
choice for $W$.

\begin{lemma}
  \label{lem::isfull}
  Let $(X,\varphi)$ be coverlike, free, injective at $v$, and suppose
  $X_v$ is full. Let $W\subseteq X_v$ be as above. If $W\neq X_v$ then
  there is a graph $Z$ with $X\succ Z$.
\end{lemma}

\begin{proof}
  Since $W\to \Sigma_w$ is a finite sheeted cover and $X_v\to\Sigma$
  is injective on fundamental group, $X_v$ is obtained from $W$ by
  attaching a collection of finite trees. Since $X_v$ has valence one
  vertices and $X^1$ doesn't have valence one vertices, there is an
  edge $g$ of $\Gamma_U$ of $X$ such that $\tau\colon X_g\to X_v$ has
  image in $X_v\setminus W$. There is a folding sequence
  \[
  X_v\to\dotsb\to W\immerses\Sigma_w
  \]
  Let $\psi\colon Y\to\Sigma$ be the generating system obtained by
  quotienting $X$ by the composition $X_v\to W$. Clearly $Y_v=W$. Then
  there is some oriented edge $f_i$ incident to $w$ such that
  $\psi(g)=f_i$. Since
  \[
  \sqcup_j Y_{e^i_j}\to Y_{v}\times_{\Sigma_w}\Sigma_{f_i}
  \]
  is surjective the induced map
  \[
  Y_g\to Y_v\times_{\Sigma_w}\Sigma_{f_i}
  \]
  lands in a connected component which contains the image of
  $Y_{e^i_j}$ for some $i$. Since $g\neq e^i_j$, $Y_{e^i_j}$ and $Y_g$
  fold. Apply Lemma~\ref{lem::twosamedelta}.
\end{proof}

\begin{proof}[Proof of Lemma~\ref{lem::npithenor}]
  Since $\varphi$ is not pointwise injective, there is a collapsible
  vertex space $X_v$ such that $\varphi_v\colon
  X_v\to\Sigma_{\varphi(v)}$ is not $\pi_1$--injective.  Let
  \[
  X_v=V_0\to V_1\to\dotsb\to V_l
  \] 
  be a folding sequence for $\varphi_v\colon
  X_v\to\Sigma_{\varphi(v)}$, and let $X^i$ be the space obtained by
  replacing $X_v$ by $V_i.$ Let $k$ be the first index such that
  either $X^k\to\Sigma$ folds a pair of horizontal edges or $V_k\to
  V_{k+1}$ is not a homotopy equivalence. Note that $X^k\to\Sigma$ is
  coverlike: if not then it folds a pair of squares, but then
  $X^{k-1}\to\Sigma$ folds a pair of horizontal edges, contrary to our
  choice of $k$.

  If $X^k\to\Sigma$ folds horizontal edges in distinct edge spaces, by
  Lemma~\ref{foldablepair}, there is a graph $Z\prec X$. Suppose $X^k$
  has a self-folding edge space $X^k_e\to X^k_v$. Since $X^k_v$ is
  collapsible there is an edge space $X^k_f$ and an edge $h\subset
  X^k_v$ such that $X^k_f$ is the only incident edge space which
  traverses $h$. Let $Y$ be the coverlike graph of graphs obtained by
  collapsing the annulus from $h$. If $e=f$ and $Y$ is pointwise
  injective then by Lemma~\ref{lem::foldinsameedgespace} $Y$ folds
  down to a pointwise immersion with an obvious relation. If $Y$ is
  not pointwise injective then the map $Y_v\to\Sigma_{\varphi(v)}$ is
  not injective on fundamental group and we repeat the process
  starting with $Y$. Likewise, if $e\neq f$ then
  $Y_v\to\Sigma_{\varphi(v)}$ is not injective on fundamental group
  and we start over using $Y$ as the initial data. Since $\vert
  Y\vert_c<\vert X\vert_c$ this can happen only finitely many times.

  Thus we may assume that that $X^k\to\Sigma$ doesn't fold
  horizontal edges. Since $X^k_v\to X^{k+1}_v$ isn't a homotopy
  equivalence there is a pair of edges $g,h$ in $X^k_v$ which have the
  same endpoints and are identified by $\varphi$.

  Suppose that $X^{k+1}\to\Sigma$ folds squares. Then $X^{k+1}$ folds
  horizontal edges, but since $X^{k+1}$ is obtained by identifying
  edges in $X^k_v$, $X^{k+1}$ and $X^k$ have the same horizontal
  one-skeleton, hence $X^k\to\Sigma$ folds horizontal edges, contrary
  to the previous paragraph.


  If $\pi_1(X^{k+1})$ is free then by Lemma~\ref{lem::shenitzer}
  $X\simeq X^k\succ X^{k+1}\succeq Z$, where $Z$ is a graph, hence the
  first bullet holds.

  Hence we assume $\pi_1(X^{k+1})$ is not free. Then there is a closed
  surface $S$ of non-positive Euler characteristic and an immersion
  $S\to X^{k+1}$ which is one-to-one on the set of squares. Since
  $X^{k+1}\to \Sigma$ doesn't fold squares, the map $S\to \Sigma$
  doesn't fold squares, and is therefore a branched cover by
  Lemma~\ref{lem::ifbranchedcover}. Since $X^{k+1}\to \Sigma$ doesn't
  fold horizontal edges, the map $S\to \Sigma$ is a cover and
  we conclude that the map $S\to X^{k+1}$ is an embedding, otherwise
  there is a pair of edges in $S$ which have the same image in
  $\Sigma$ and share a vertex in $X^{k+1}$.

  Collapse $X^k$ from $h$ to obtain $Y$. If $Y\to \Sigma$ isn't
  pointwise injective then start over using $Y$ as the initial
  data. If it is injective, since $S$ is a finite sheeted cover and
  $g$ and $h$ share endpoints and have the same image in $\Sigma$,
  $Y_v$ has a connected subgraph $W\subset Y_v=X^k_v\setminus
  h^{\circ}$ which is a finite sheeted cover of
  $\Sigma_{\varphi(v)}$. Then $Y_v$ is full, and if $W\subsetneq Y_v$
  then by Lemma~\ref{lem::isfull} there is a graph $Z\prec Y\simeq X$.
  The only remaining possibility is that $W=Y_v$, but then the map
  $S\into X^{k+1}$ is a homeomorphism and $Y\to\Sigma$ is a standard
  generating system.
\end{proof}




\subsection{Minimal complexity, obvious relation $\Rightarrow$ standard}
\label{subsec::structureofminimal}

Let $f\colon\free_n\to\pi_1(\Sigma)$ be a marking. Represent $f$ as a
map of graphs of graphs as follows. Let $Z$ be a rose with fundamental
group $\free_n,$ and identify the petals of $Z$ with the generators of
$\free_n.$ There is a map $\varphi\colon Z\to\Sigma^{1}$ inducing
$f$. We may assume, by subdivision and homotopy, that $\varphi$ is a
morphism of graphs. We regard $Z$ as a graph of graphs over $\Sigma$
by declaring the connected components of preimages of vertex spaces of
$\Sigma$ to be vertex spaces. The remaining edges are horizontal and
map to horizontal edges of $\Sigma$. Midpoints of horizontal edges are
edge spaces.

Lemmas~\ref{lem::makenotinjective} and~\ref{lem::npithenor} imply that the
collection of $X\preceq Z$ with obvious relations is not empty. Choose
$X\preceq Z$ with an obvious relation such that if $X'\preceq X$ then
$X'\simeq X$ and
\[
\vert X\vert_c=\min\{\vert X'\vert_c\mid X\simeq X'\mbox{ and }X'\mbox{ has an obvious relation.}\}
\]

\begin{lemma}
  \label{lem::lookslikecover}
  Let $X$ be as above, with $X_e$ a point and $\delta X_e$ and $\delta
  X_{\overline e}$ circular. Then each edge in a vertex space is
  covered exactly twice by the collection of incident edge maps or
  $\delta X_e$ and $\delta X_{\overline e}$.
\end{lemma}

\begin{proof}
  First, any free face is traversed either by $\delta X_e$ or $\delta
  X_{\overline e},$ otherwise an annulus collapse gives a generating
  system with an obvious relation and fewer circular edge spaces.

  Hence all vertex spaces other than $X_{\iota(e)}$ and $X_{\tau(e)}$
  are finite sheeted covers of their associated vertex spaces of
  $\Sigma$, and all their incident edge spaces are circles. Suppose
  that $\delta X_e$ traverses an edge $h$ of $X_{\tau(e)}$ exactly
  once, that $\delta X_{\overline e}$ doesn't traverse $h$, and that
  no incident edge space traverses $h.$ Form $X'$ by pulling $\delta
  X_e$ across $e$. Then $h$ is traversed once by $X'_e$, not traversed
  by any other incident edge spaces, and is thus a free face in $X'$.
  Annulus collapse from $h$ to obtain $Y$. Since both $\delta X_e$ and
  $\delta X_{\overline e}$ were circles, $Y_{\iota(e)}$ is
  collapsible and $Y_{\iota(e)}\to\Sigma_{\varphi(\iota(e))}$ is not
  $\pi_1$--injective. Clearly $\vert X\vert_c=\vert Y\vert_c$, but
  then Lemma~\ref{lem::npithenor} implies that $X\simeq X''$ with $\vert
  X''\vert_c<\vert X\vert_c$, contradicting minimality.

  Thus every edge traversed by $\delta X_e$ is traversed again by
  $\delta X_e$, $\delta X_{\overline e}$, or another incident edge
  space, hence $X_{\tau(e)}\to\Sigma_{\varphi(\tau(e))}$ and
  $X_{\iota(e)}\to\Sigma_{\varphi(\iota(e))}$ are finite sheeted
  covers.
\end{proof}

Schematically, $\varphi\colon X\to\Sigma$ has the form depicted in
Figure~\ref{fig::finitesheeted}.

\begin{figure}[ht]
\labellist
\pinlabel $X$ at 58 66
\pinlabel $\Sigma$ at 232 48
\pinlabel $X_e$ [l] at 120 43
\pinlabel $\delta X_e$ [bl] at 118 53
\pinlabel $\delta X_{\overline e}$ [tl] at 119 35
\pinlabel $\fontop{\varphi}$ at 157 45
\endlabellist
\centerline{\includegraphics{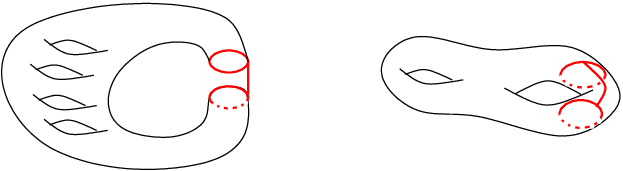}}
\caption{A candidate minimal complexity element with an obvious
  relation. $\varphi$ looks like a finite sheeted cover away from
  $X_e$.}
\label{fig::finitesheeted}
\end{figure}

\begin{lemma}
  \label{lem::noamalgam}
  $\delta X_{\overline e}$ traverses some edge of $X_{\iota(e)}$ that is
  traversed either by some other circular incident edge space or, if
  $\tau(e)=\iota(e)$, by some other circular incident edge space or
  $\delta X_e$.
\end{lemma}

\begin{proof}
  Suppose that every edge traversed by $\delta X_{\overline e}$ is
  traversed by $\delta X_{\overline e}$ twice. Then
  $\Sigma_{\varphi(\iota(e))}$ has only one incident edge map,
  contradicting our choice of graph of graphs structure on $\Sigma$.
\end{proof}



There are now two cases to consider, depending on the degrees of
$\delta X_e\to \Sigma_{\varphi(e)}$ and $\delta X_{\overline e}
\to\Sigma_{\varphi(\overline e)}=\Sigma_{\varphi(e)}.$ Suppose they
have different degrees. One of them, say the former, is smaller.  Let
$h$ be an edge of $X_{\iota(e)}$ traversed by $\delta X_{\overline e}$
and either by some other incident edge space or $\delta X_e.$ Pull
$\delta X_e$ across $e$ to form $X'$. In $X',$ $h$ is a free face and
is traversed by a circular incident edge space $X'_g$. Collapse the
annulus from $h$ in $X'$ to form $Z.$ By construction $Z$ is
coverlike, $\Comp(Z)=\Comp(X),$ $\vert X\vert_c=\vert Z\vert_c$, and
$Z_{\iota(e)}$ is collapsible. Since the degree of $\delta
X_e\to\Sigma_{\varphi(e)}$ is strictly less than the degree of $\delta
X_{\overline e}\to\Sigma_{\varphi(e)}$, Lemma~\ref{lem::notinjective}
implies the vertex map $Z_{\iota(e)}\to\Sigma_{\varphi(\iota(e))}$ is
not $\pi_1$--injective.

Since $\varphi_{\iota(e)}$ is not $\pi_1$--injective and $Z_g$ is not
circular, Lemma~\ref{lem::npithenor} gives a generating system with
fewer circular edge spaces and an obvious relation, contrary to
hypothesis.  Thus the two maps have the same degrees. Let $X'$ be the
space obtained by replacing $X_e$ by $\delta X_e$ or, equivalently,
since the degrees are the same, $\delta X_{\overline e}.$ In $X',$
every vertex map is finite index, all incident edge spaces are
circular, no horizontal edges fold, and therefore $X'\to\Sigma$ is a
finite sheeted cover. Since $\varphi_*(\pi_1(X))$ generates
$\pi_1(\Sigma),$ $X'\to\Sigma$ must be an isomorphism of square
complexes. Since $X$ is obtained by replacing $X'_e$ by a vertex, $X$
represents a minimal generating system. This completes the proof of
Theorem~\ref{maintheoremii}, hence Theorem~\ref{maintheorem} follows
as well.\hfill\qedsymbol

\bibliographystyle{amsalpha} \bibliography{nielsen}

\begin{flushleft}
\emph{email:} \texttt{l.louder@ucl.ac.uk, lars@d503.net}
\end{flushleft}

\end{document}